\renewcommand\doi{10.4208/nmtma.  }
\begin{document}

\markboth{J. Kim and J. Xin}{An Efficient Particle-Field Algorithm based on a PHKS in 3D}
\title{An Efficient Particle-Field Algorithm with 
Neural Interpolation based on a Parabolic-Hyperbolic
Chemotaxis System in 3D}

\author[Jongwon David Kim and Jack Xin]{Jongwon David Kim\affil{1}\comma\corrauth, Jack Xin \affil{1}}
\address{
\affilnum{1} Department of Mathematics, University of California, Irvine, Irvine 92697, USA \  \\
[2ex]
\rm Received   ; Accepted (in revised version)  }

\emails{
{\tt jongwodk@uci.edu  } (Jongwon D. Kim),
{\tt jack.xin@uci.edu } (Jack Xin)
}

%%%%% Begin Abstract %%%%%%%%%%%
\begin{abstract}
Tumor angiogenesis involves a collection of tumor cells moving towards blood vessels for nutrients to grow. Angiogenesis, and in general chemotaxis systems have been modeled using partial differential equations (PDEs) and as such require numerical methods to approximate their solutions in 3 space dimensions (3D). This is an expensive computation when solutions develop large gradients at unknown locations, and so efficient algorithms to 
capture the main dynamical behavior are valuable. Here as a case study,  we consider a parabolic-hyperbolic Keller-Segel (PHKS) system in the angiogenesis literature, and develop a mesh-free particle-based neural network algorithm that scales better to 3D than traditional mesh based solvers. 
From a regularized approximation of PHKS, we derive a neural stochastic interacting particle-field (NSIPF) algorithm where the bacterial density is represented as empirical measures of particles and the field variable (concentration of chemo-attractant) by a convolutional neural network (CNN) trained on low cost 
synthetic data.  As a new model, NSIPF preserves total mass and non-negativity of the density, and captures the dynamics of 3D multi-bump solutions at much faster speeds compared with classical finite difference (FD) and spline based SIPF methods.

\end{abstract}
%%%%% end %%%%%%%%%%%

%%%%% Keywords %%%%%%%%%%%
\keywords{ Chemotaxis, neural particle-field algorithm, efficiency.  }

%%%% AMS subject classifications %%%%
\ams{65C35, 65M75  }

%%%% maketitle %%%%%
\maketitle

\section{Introduction}
Mathematical models are powerful tools to study a wide range of physical and biological phenomena. The field of cancer modeling includes various approaches from mechanistic models that explore the detailed biochemical mechanisms of diseases to data-driven models that facilitate clinical decision-making \cite{Bekisz2020}. Angiogenesis is the biological process of the formation of new blood vessels and provides a way for tumors to metastasize. This phenomenon has been widely studied by both clinical and computational scientists. Keller and Segel (KS) first introduced the chemotaxis system of partial differential equations (PDEs) to model the movement of bacteria to a food source (chemoattractant) \cite{KS70}. Since then, many PDE models of KS type have been used to study tumor growth models and their associated biological processes.\par 

In this paper, in the spirit of \cite{Wang2023,Hu2024} for fully parabolic chemotaxis and related haptotaxis systems, we introduce a stochastic interacting particle-field algorithm 
with neural interpolation (Neural SIPF) for a parabolic-hyperbolic KS (PHKS) system motivated by angiogenesis \cite{Corrias2003}.
Our method takes into account the coupled stochastic particle and field dynamics, where the field is the chemo-attractant concentration. In our Neural SIPF (NSIPF)  algorithm, we approximate the density of active particles by a sum  of delta functions centered at the particle positions. The non-smoothness of particle representation goes into the field due to hyperbolicity.  
An interpolation is necessary from particle representation to compute the gradient of the field that drives the particle evolution. While classical interpolators such as spline interpolation can be used for this step, we study neural interpolators due to their advantages in high dimensions. Neural interpolators have been of interest in scientific disciplines such as weather forecasting or earth sciences \cite{Leionen24, Zhan2023} and have also been of broader mathematical interest \cite{Wangetal2023}. In these works, training usually consists of millions of sample data samples obtained by experimentalists. 

In this paper, however, the neural network is trained on synthetic data produced by solving the radial PHKS system. Computing such a system is easier than solving the original PDE system by traditional methods, thus providing an easier route to generating solutions as training data. Our method is mesh free, easy to implement, and able to capture the diffusive and aggregation behavior of  the system as shown by comparison with finite difference method (FDM) and classical SIPF (e.g. with a spline interpolator).
In summary, our main contributions are as follows: 
\begin{itemize}
    \item We introduce a convolutional neural network architecture (CNN) to successfully interpolate SIPF solutions to the PHKS system in 2D and 3D. 
    \item The CNN interpolator is trained on radial solution data cheaply, thus removing the need of a fully resolved FDM for collecting training data.
    \item SIPF with CNN interpolation (NSIPF) is comparable to FDM and SIPF with classical interpolation (i.e. splines) in the quality of solutions, yet is easier to implement at lower computational costs. 
    \item Combining the above, we introduce a paradigm of simulating a regularized version of the original system by a particle-field-neural-network approach. It 
generates a new model that preserves physical properties (mass conservation and non-negativity) and 
is easier to scale up in multiple dimensions while maintaining qualitative properties of numerical solutions.
\end{itemize}

The rest of the paper is organized as follows: In section 2, we review the PHKS system of equations analyzed in \cite{Corrias2003}, and present a propagation of chaos theory for a convolution regularized PHKS system as a motivation for neural interpolation. In section 3, we describe the main SIPF algorithm that utilizes the theoretical stochastic differential equation (SDE) formulation of particle density as well as the need of a numerical interpolator to evolve the gradient of concentration field and particle positions.  In section 4, we show numerical results and convergence tests of the NSIPF algorithm on locally Gaussian shaped initial conditions for density and concentration. At higher resolutions in 3D, NSIPF has much smaller run times than FDM and classical SIPF while capturing major dynamical properties. Concluding remarks are in section 5.

\section{Parabolic-Hyperbolic KS (PHKS)}
In this section we present the PHKS system and discuss previous theoretical and numerical work. 
The original system is given below: 
\begin{align}
    \rho_t &= \nabla \cdot (\gamma \nabla \rho - \chi \rho \nabla c)\\
    c_t &= -c\, \rho
\end{align}
where \(\rho\) is the density of the bacteria and \(c\) is the concentration of the chemoattractant. The bacteria diffuse with mobility \(\gamma\) and drift in the direction of \(\nabla c\) with velocity \(\chi \nabla c\), where \(\chi\) is called chemo-sensitivity. In the context of tumor growth, this is known as a type of \textit{angiogenesis}, in which the endothelial cells of a tumor produce its own chemical to induce blood vessel growth. Theoretical analysis of well-posedness of this system is in \cite{Corrias2003} with the existence of a family of self-similar solutions for the 2D case. While this proves to be useful in training a neural interpolator in 2D, it appears unknown that such a family exists in 3D. 

\subsection{Propagation of chaos for the PHKS system}
The \textit{propagation of chaos} is a mathematical property of a system of \(N\) interacting particles that describes the limiting behavior of the system as the number of particles grows very large. Physically, this idea describes that for large systems the correlations between given particles due to interactions become negligible. In other words, the particles reach a ``statistical independence" as the system becomes larger, meaning that the \textit{average behavior} can be understood rather than the detailed correlated trajectories of each individual particle. The motivating scientific ideas root from Boltzmann's kinetic theory of gases in the 19th century but were later more mathematically formalized by Kac and McKean in the middle of the 20th century. \par 
In this subsection, we discuss the relevance of the propagation of chaos for a McKean-Vlasov interacting particle system and thus theoretical motivation for NSIPF algorithm. We refer to \cite{Diez2022} for further details and references therein. Propagation of chaos arguments are used to show that the solutions to interacting particle systems converge to solutions of the corresponding Fokker-Planck equation. In our case, we have the McKean-Vlasov process: 
\begin{equation}
    dX_t^i = \frac{1}{N} \sum_{j = 1}^N \mathcal{K}(X_t^i, X_t^j)dt + dB_t^i
\end{equation}
where the sum is a sum of binary interaction forces given by \(\mathcal{K}\) of particles \(X^i, X^j\) at time \(t\), in the overdamped regime, meaning there are no mechanical oscillations. \(B_t^i\) is the regular Brownian motion. The summation term above is given by 
\begin{equation}
    \frac{1}{N} \sum_{j = 1}^N\mathcal{K}(X_t^i, X_t^j) = \int_{\mathbb{R}^d} \mathcal{K}(X_t^i, y)\hat{\mu}_t^N(dy) \eqqcolon b(X_t^i, \hat{\mu}_t^N)
\end{equation}
where \(\hat{\mu}_t^N\) is the empirical measure \(\hat{\mu}_t^N = \frac{1}{N}\sum_{j = 1}^N \delta_{X_t^j} \in \mathcal{P}(\mathbb{R}^d)\), meaning that 
\[
b: \mathbb{R}^d \times \mathcal{P}(\mathbb{R}^d) \to \mathbb{R}^d
\]
and \(b\) is of the form \(b(x, \mu) \coloneqq \int \mathcal{K}(x,y)\mu(dy)\), which is linear in \(\mu\). In general, we have the following: 
\begin{equation}
    dX_t^i = b(X_t^i, \hat{\mu}_t^N)dt + dB_t^i 
\end{equation}
which we call the ``pointwise" McKean-Vlasov system. Since NSIPF depends on the previous time step, we must formulate it in the path-dependent sense. We define the probability measure associated to the law of the process: 
\begin{equation}
    \hat{\mu}_{[0, T]}^N \coloneqq \frac{1}{N}\sum_{i = 1}^N \delta_{X_{[0, T]}^i} \in \mathcal{P}(\mathcal{C}_T).
\end{equation}
Now the pathwise McKean-Vlasov system is written as 
\begin{equation}
    dX_t^i = \tilde{b}(X_{[0, T]}^i, \tilde{\mu}_{[0, T]}^N)dt + dB_t^i
\end{equation}
where \(\tilde{b}: \mathcal{C}_T \times \mathcal{P}(\mathcal{C}_T) \to \mathbb{R}^d\). In the case of PHKS, we have
\begin{equation}
    c(t) = \exp\bigg(-\int_0^t \rho(s)ds\bigg)\, c_0(x).
\end{equation}
To connect the previous discussion to our equation, we define the function \(b\) to be 
\begin{equation}
    b(x, \rho) = \nabla_x \bigg\{\exp\bigg(-\int_0^t \rho(s)ds\bigg)c_0(x)\bigg\}.
\end{equation}
However, \(b: \mathbb{R}^d \times \mathcal{P}(\mathcal{C}_T) \to \mathbb{R}^d\), but \(\rho\) is not just a measure but a probability \textit{density} and therefore belongs in \(\mathcal{P}^{\text{ac}}(\mathcal{C}_T)\). The empirical measure \(\hat{\mu}^N\) is not absolutely continuous but a \textit{discrete} measure. As such, we cannot simply define \(b\) as above. To address this, we smoothe it with a mollifier. Choose \(\delta > 0\) and take \(\mathcal{K}_\epsilon \to \delta_0\) as \(\epsilon \to 0\). The equation for \(c\) should then be 
\begin{equation}
    c_t = - c(\mathcal{K}_\epsilon \star \rho).
\end{equation}
Define  
\begin{equation}
    b_\epsilon(x, \rho) := \exp\bigg(-\int_0^t \mathcal{K}_\epsilon \star \rho(s)ds\bigg)\bigg(\nabla c_0 - c_0(x) \int_0^t \nabla \mathcal{K}_\epsilon \star \rho(s)ds\bigg).   
\end{equation}
Now, \(b_\epsilon\) is defined appropriately with \(\mathcal{P}(\mathcal{C}_T)\) and the particle system is now well-defined. If \(b_\epsilon\) is Lipschitz bounded, then we are able to utilize a propagation of chaos statement. In particular, we refer to the concept of \textit{moderate interaction} introduced by Oelschläger in the 1980's for systems with constant diffusion matrix \(\sigma \equiv \sqrt{2}I_d\) and with a symmetric interaction kernel \(K_1\) which \textit{depends on \(N\)}:
\begin{equation}
    \forall x, y, \in \mathbb{R}^d, \hspace{4mm} K_1(x,y) \equiv K_1^N(y - x) \coloneqq \frac{1}{\epsilon_N^d}K_0\bigg(\frac{y - x}{\epsilon_N}\bigg),
\end{equation}
where \(K_0: \mathbb{R}^d \to \mathbb{R}\) is a fixed symmetric radial kernel and \((\epsilon_N)_N\) is a sequence such that \(\epsilon_N \to 0\) as \(N \to +\infty\). The strength of the interaction between two particles is of the order \(\sim \epsilon_N^{-d}N^{-1}\). Oelschläger considered the case \(\epsilon_N = N^{-\beta/d}\) with \(\beta \in (0, 1)\). Note that if \(\beta = 0\) then there's weak interaction (\(\mathcal{O}(N^{-1})\)) and \(\beta = 1\) gives strong interaction (\(\mathcal{O}(1)\)), hence the term \textit{moderate interaction}. We refer this to any situation where  \(\epsilon_N \to 0\) and \(\epsilon_N^{-d}N^{-1} = o(1)\), which leads to 
\[
K_1^N(x, \cdot) \xrightarrow[N \to +\infty]{} \delta_x,
\]
in the sense of distribution, allowing us to recover singular \textit{purely local} interactions. In the NSIPF algorithm, we view the convolutional neural network framework as a proxy convolution operation. To establish a mathematical statement, we introduce the following assumption:

\begin{assumption}
For a fixed \(\epsilon > 0\) and probability measures \(\mu, \nu\), if \(|b_\epsilon(x, \mu) - b_\epsilon(y, \nu)| \leq C(|x - y| + W_2(\mu, \nu))\) (where \(W_2\) is the Wasserstein-2 distance), then for all finite \(T > 0\), the Fokker-Planck equation is well-posed in the pathwise sense and the associated SDE has a unique strong solution.
\end{assumption} With this, we present the following theorem: 

\begin{theorem}{(Propagation of chaos with moderate interaction)}
Let the interacting particle system and its mollification for PHKS system be 
\begin{equation}
    dX_t^i = \chi b(X_t^i, f_t)dt + \sqrt{2\gamma}dB_t^i
\end{equation}
%and let the interacting particle system of the mollified PHKS be 
\begin{equation}
    d\bar{X}_t^{i, N} = \chi b_\epsilon(\bar{X}_t^{i, N}, K_\epsilon \star \mu_{[0, T]}^N)dt + \sqrt{2\gamma}dB_t^i
\end{equation}
where \(b_\epsilon\) is defined by (2.11). Then, by synchronous coupling with moderate interaction, the propagation of chaos holds for the PHKS system. As \(N \to \infty\), the empirical measure of the particle system (2.14) converges weakly to the measure with density $\rho$ satisfying the regularized PHKS system: 
\begin{align}
    \rho_t &= \nabla \cdot (\gamma \nabla (\mathcal{K}_\epsilon \star \rho) - \chi (\mathcal{K}_\epsilon \star \rho) \nabla c)\\
    c_t &= - c(\mathcal{K}_\epsilon \star \rho).
\end{align}

\end{theorem}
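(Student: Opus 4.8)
The plan is to prove the claim by a \emph{synchronous (McKean) coupling} between the mollified $N$-particle system (14) and $N$ independent copies of the associated nonlinear limit process, closing a Gr\"onwall estimate on the strength of the regularity granted by Assumption 1. First I would introduce the nonlinear McKean--Vlasov process
\begin{equation}
    d\bar{Y}_t^{i} = \chi\, b_\epsilon\big(\bar{Y}_t^{i}, K_\epsilon \star \bar{\mu}_{[0,T]}\big)\,dt + \sqrt{2\gamma}\,dB_t^i,
    \qquad \bar{\mu}_{[0,T]} = \mathrm{Law}\big(\bar{Y}_{[0,T]}^{i}\big),
\end{equation}
driven by the \emph{same} Brownian motions $B_t^i$ as (14), with the paths $\bar{Y}_{[0,T]}^{i}$ independent and identically distributed and started from the common initial law. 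Under Assumption 1 the field $\mu \mapsto b_\epsilon(\cdot,\mu)$ is Lipschitz in the spatial variable and in the Wasserstein-2 metric, so a Banach fixed-point argument on $\mathcal{P}(\mathcal{C}_T)$ gives existence and uniqueness of the law $\bar{\mu}_{[0,T]}$; Itô's formula applied to smooth test functions then shows that its time marginals carry a density $\rho$ solving the regularized system (15)--(16), the convolutions there arising precisely from the kernel $K_\epsilon$ inside $b_\epsilon$.

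Next I would control the pathwise discrepancy. Subtracting (14) from the coupled copy of the nonlinear process, the stochastic integrals cancel identically --- this is the entire point of the synchronous coupling --- leaving
\begin{equation}
    \bar{X}_t^{i,N} - \bar{Y}_t^{i} = \chi \int_0^t \Big[\, b_\epsilon\big(\bar{X}_s^{i,N}, K_\epsilon \star \mu_{[0,T]}^N\big) - b_\epsilon\big(\bar{Y}_s^{i}, K_\epsilon \star \bar{\mu}_{[0,T]}\big)\Big]\,ds.
\end{equation}
Applying the Lipschitz bound of Assumption 1, together with the triangle inequality $W_2(\mu^N,\bar{\mu}) \le W_2(\mu^N,\bar{\mu}^N) + W_2(\bar{\mu}^N,\bar{\mu})$ for the empirical measure $\bar{\mu}^N = \tfrac{1}{N}\sum_i \delta_{\bar{Y}^{i}}$ of the i.i.d.\ copies, I would dominate the coupling part by $W_2^2(\mu^N,\bar{\mu}^N) \le \tfrac{1}{N}\sum_i \sup_{s\le t}|\bar{X}_s^{i,N} - \bar{Y}_s^{i}|^2$ and the genuinely random part $W_2(\bar{\mu}^N,\bar{\mu})$ by the law of large numbers for empirical measures of i.i.d.\ samples (quantitatively, the Fournier--Guillin concentration rates). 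A Gr\"onwall inequality for $u_N(t) := \sup_i \mathbb{E}\big[\sup_{s\le t}|\bar{X}_s^{i,N} - \bar{Y}_s^{i}|^2\big]$ then forces $u_N(T)\to 0$, whence $\mu_{[0,T]}^N \to \bar{\mu}_{[0,T]}$ weakly and the limiting density is identified with the solution $\rho$ of (15)--(16).

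The hard part is the \emph{moderate} scaling, where $\epsilon = \epsilon_N \to 0$ jointly with $N\to\infty$ along $\epsilon_N = N^{-\beta/d}$, $\beta\in(0,1)$, so that $\epsilon_N^{-d}N^{-1}=o(1)$. Assumption 1 is stated for \emph{fixed} $\epsilon$, and the Lipschitz constant it supplies degenerates as $\epsilon\to 0$, since differentiating the mollified kernel inside $b_\epsilon$ produces factors such as $\|\nabla K_{\epsilon_N}\|_{\infty}\sim \epsilon_N^{-d-1}$ and $\|\nabla^2 K_{\epsilon_N}\|_{\infty}\sim \epsilon_N^{-d-2}$. The Gr\"onwall factor therefore grows like $\exp\big(C(\epsilon_N)\,T\big)$, and the argument closes only if this exponential blow-up is dominated by the shrinking Fournier--Guillin fluctuation bound under the prescribed scaling. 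Securing that quantitative balance --- rather than the coupling and Gr\"onwall manipulations, which are routine once Assumption 1 is granted --- is where the real work lies and constitutes the core of the moderate-interaction mechanism; a secondary technical point is to verify that $K_{\epsilon_N}\star\mu_{[0,T]}^N$ converges to the limiting density in a topology strong enough to pass to the limit in the nonlocal drift and recover the regularized system (15)--(16).
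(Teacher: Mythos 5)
Your coupling architecture is exactly the paper's: a synchronous coupling between the mollified particle system and the nonlinear McKean--Vlasov process at fixed $\epsilon$, the Lipschitz/Wasserstein estimate from Assumption 1, the domination $W_2^2(\mu^N,\bar\mu^N)\le\frac{1}{N}\sum_i\sup_{s\le t}|\bar X_s^{i,N}-\bar Y_s^i|^2$, a quantitative law of large numbers for the i.i.d.\ empirical measure, and Gr\"onwall. The paper runs the same steps (via BDG plus Gr\"onwall) to get $\mathbb{E}[\sup_{t\le T}|X_t^i-\bar X_t^i|^2]\le C_1 e^{C_2T}C_3/N$ at fixed $\epsilon$, and what you defer as a ``secondary technical point'' --- passing from the $\epsilon$-regularized nonlinear process to the limit drift --- is handled there by importing the Jourdain--M\'el\'eard bias bound $\mathbb{E}[\sup_{t\le T}|\bar X_t^{i,N}-\bar X_t^i|^2]\le C\epsilon_N^{2\beta}$, which you should state rather than leave to a topology-of-convergence remark.

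The genuine gap is at the step you yourself identify as the core and then leave open --- and the one concrete scaling you do posit would fail there. Using the kernel bounds you correctly record, $\|K_{\epsilon_N}\|_\infty\sim\epsilon_N^{-d}$ and $\|K_{\epsilon_N}\|_{\mathrm{Lip}}\sim\epsilon_N^{-(d+1)}$, the Lipschitz-coupling estimate with $N$-dependent constants gives
\[
\mathbb{E}\Big[\sup_{t\le T}|X_t^{i,N}-\bar X_t^{i,N}|^2\Big]\;\le\;\tilde c_1\,\frac{\epsilon_N^{-2d}}{N}\,\exp\!\big(\tilde c_2\,\epsilon_N^{-2(d+1)}\big).
\]
Under your proposed Oelschl\"ager window $\epsilon_N=N^{-\beta/d}$, $\beta\in(0,1)$, the exponential factor is $\exp(\tilde c_2 N^{2\beta(d+1)/d})$, which overwhelms the $1/N$ decay for \emph{every} $\beta>0$; that polynomial window is attainable by Oelschl\"ager's $L^2$/martingale method, not by the pathwise Lipschitz coupling that you (and the paper) actually employ. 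The paper closes the estimate by taking $\epsilon_N\to 0$ only logarithmically: the sufficient condition $\epsilon_N^{-2(d+1)}\le\delta\log N$ for small $\delta>0$ makes the right-hand side of order $(\delta\log N)^{d/(d+1)}N^{\tilde c_2\delta-1}\to 0$, and combining with the bias term yields the final bound $C\epsilon_N^{2\beta}+\tilde c_1\epsilon_N^{-2d}N^{-1}\exp(\tilde c_2\epsilon_N^{-2(d+1)})\to 0$. To repair your write-up, replace the polynomial scaling by any sequence satisfying $\epsilon_N^{-2(d+1)}\le\delta\log N$ and supply the Jourdain--M\'el\'eard bias estimate explicitly; with those two changes your argument coincides with the paper's proof.
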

\begin{proof}
In order to show that (2.14) converges to (2.13), we require limits as \(N \to +\infty\) and \(\epsilon \to 0\). The ideas of the proof follow closely Sznitman's proof of McKean's theorem of Lipschitz interactions using synchronous coupling which can be found in the review paper \cite{Diez2022} and moderate interaction which can be found in \cite{JourdainMeleard1998}. It first suffices to show that \(\lim_{N \to +\infty} \mathbb{E}[\sup_{t \leq T} |X_t^i - \bar{X}_t^i|^2] = 0\). Let \(\chi = 1\), \(\mu = \mu_{[0, T]}^N = \mu_{X_t^N}\), \(\bar{\mu} = \mu_{\bar{X}_t^N} = \frac{1}{N}\sum_{i = 1}^N\delta_{\bar{X}_t^i}\) and \(f_t\) be the law of the associated system. By the Burkholder-Davis-Gundy (BDG) inequality, for \(i \in \{1, 2, ..., N\}\), 
\begin{align}
&    \mathbb{E}[\sup_{t \leq T} |X_t^i - \bar{X}_t^i|^2] \leq 2T\int_0^T \mathbb{E}|b_\epsilon(X_t^i, \mu) - b(\bar{X}_t^i, f_t)|^2dt\\
    &\leq 4T\int_0^T\mathbb{E}|b_\epsilon(X_t^i, \mu) - b_\epsilon(\bar{X}_t^i, \bar{\mu})|^2 + \mathbb{E}|b_\epsilon(\bar{X}_t^i, \bar{\mu}) - b_\epsilon(\bar{X}_t^i, f_t)|^2dt
\end{align}
Then, 
\begin{align}
    \mathbb{E}|b_\epsilon(X_t^i, \mu) - b_\epsilon(\bar{X}_t^i, \bar{\mu})|^2 &\leq C(\mathbb{E}|X_t^i - \bar{X}_t^i|^2 + \mathbb{E}W_2^2(\mu, \bar{\mu}))\\
    &\leq C(\mathbb{E}|X_t^i - \bar{X}_t^i|^2 + \frac{1}{N}\sum_{j = 1}^N\mathbb{E}|X_t^j - \bar{X}_t^j|^2)\\
    &\leq 2C\mathbb{E}|X_t^i - \bar{X}_t^i|^2
\end{align}
and \(\mathbb{E}|b_\epsilon(\bar{X}_t^i, \bar{\mu}) - b_\epsilon(\bar{X}_t^i, f_t)|^2 \leq C\mathbb{E}W_2^2(\bar{\mu}, f_t)\). Combining, we have 
\begin{align}
    \mathbb{E}[\sup_{t \leq T}|X_t^i - \bar{X}_t^i|^2] &\leq C_1\int_0^T \mathbb{E}W_2^2(\bar{\mu}, f_t)dt + C_2 \int_0^T \mathbb{E}|X_t^i - \bar{X}_t^i|^2dt\\
    &\leq C_1e^{C_2T}\int_0^T\mathbb{E}W_2^2(\bar{\mu}, f_t)dt\\
    &\leq C_1e^{C_2T}\frac{C_3}{N},
\end{align}
where the second to last inequality follows from Gronwall's inequality. Then, to incorporate the moderate interaction, let \(\epsilon = \epsilon_N\) be a sequence of numbers converging to 0 and \(K_\epsilon = \epsilon_N^{-d}K(\frac{x}{\epsilon_N})\) where \(K\) is a Lipschitz continuous and bounded probability density on \(\mathbb{R}^d\). We first note that the \(L^\infty\) and Lipschitz norms of \(K_\epsilon\) are controlled by \(\|K_\epsilon\|_\infty = \frac{C_0}{\epsilon_N^d}\) and \(\|K_\epsilon\|_{\text{Lip}} = \frac{C_1}{\epsilon_N^{d+1}}\) for some constants \(C_0, C_1 > 0\) depending on \(K\). Then, McKean's theorem gives that for all \(N\), 
\begin{equation}
    \mathbb{E}[\sup_{t \leq T} |X_t^{i, N} - \bar{X}_t^{i, N}|^2] \leq \tilde{c}_1\frac{\epsilon_N^{-2d}}{N}\exp(\tilde{c}_2\epsilon_N^{-2(d+1)}),
\end{equation}
for some constants \(\tilde{c}_1, \tilde{c}_2 > 0\) depending on \(T, K,\) and \(b_\epsilon\). In order to take \(N \to +\infty\), \cite{JourdainMeleard1998} assume that \(\epsilon_N \to 0\) slowly enough such that the right hand side of (2.22) converges to 0. A sufficient condition is \(\epsilon_N^{-2(d + 1)} \leq \delta \log N\) for a small \(\delta > 0\). \cite{JourdainMeleard1998} further show that assuming that the SDE is well-posed, then
\begin{equation}
    \mathbb{E}[\sup_{t \leq T}|\bar{X}_t^{i, N} - \bar{X}_t^i|^2] \leq C\epsilon_N^{2\beta}
\end{equation} for some \(\beta > 0\). Combining these two results, we have then that 
\begin{equation}
    \mathbb{E}[\sup_{t \leq T}|X_t^{i} - \bar{X}_t|^2] \leq C\epsilon_N^{2\beta} + \tilde{c}_1\frac{\epsilon_N^{-2d}}{N}\exp(\tilde{c}_2\epsilon_N^{-2(d+1)}),
\end{equation} and the conclusion follows.

\end{proof}

\begin{remark}\label{rem1}
We remark that the proof of propagation of chaos depends on the regularity of \(c_0\) which is not smooth in general. In the case that it is, or at least \(C^2\) differentiable, the explicit Lipschitz constants can be formulated and the overall argument follows the same way.
\end{remark}

\subsection{Towards a neural approach}
Due to the propagation of chaos through mollification and moderate interaction, we are inspired to incorporate neural networks to satisfy the role of \(b_\epsilon\) due to the universal approximation capability of neural networks. In this case, we seek to define a \textit{data-driven} operator \(\mathcal{M}_\theta[f](\textbf{x})\) where \(f\) is the function to be interpolated, \(\textbf{x} \in \mathbb{R}^d\) is the data input, and \(\theta\) is the trained parameters by the neural network. Due to the role of convolution in the propagation of chaos theory, we consider using a convolutional neural network (CNN). Here, a neural network interpolator \(\mathcal{M}_\theta\) defined for \(L\) layers, is 
\begin{align}
    \mathcal{M}_\theta[f](x) &= h^{(L)}\\
    h^{(l)} &= \sigma (K^{(l)} \star h^{(l-1)} + b^{(l)}), \hspace{4mm} l = 1, ..., L\\
    h^{(0)} &= f
\end{align}
where \(K^{(l)}\) are convolutional kernels, \(\sigma\) is our activation function, and \(f\) is the input as a collection of data points \(\{f(x_i)\}_i\) where \(x_i\) are the points where \(f\) is defined (say on a uniform grid). A classical interpolator \(\mathcal{I}\) typically asserts
\begin{equation}
    \mathcal{I}[f](x_i) = f(x_i)
\end{equation}
for given data \(\{x_i\}\). However, the neural interpolator is an \textit{inexact} method, meaning that \(\mathcal{M}[f](x_i) \approx f(x_i)\).

\section{Computational methods for PHKS}
In this section, we present the main algorithms for simulating solutions to the PHKS system, including the Neural SIPF algorithm as well as the training procedure of the neural interpolator. Instead of using a classical interpolation method like cubic splines, we opt for a CNN trained on radial solutions. 

\subsection{Neural SIPF algorithm}
Consider a finite spatial domain \(\Omega = [0, L]^d\) with Neumann boundary conditions for \(\rho\) and \(c\). As a discrete time algorithm, we partition the time interval \([0, T]\) by \(\{t_n\}_{0:n_T}\) where \(t_0 = 0\) and \(t_{n_T} = T\). Approximate  density \(\rho\) by particles as:
\begin{equation} \label{rho_update}
\rho_t \approx \frac{M_0}{P}\sum_{p = 1}^P \delta(x - X_t^p),
\end{equation}
a.k.a the empirical measure, where \(P \gg 1\) is the number of particles and \(M_0\) is the conserved mass of the system. At \(t_0 = 0\), we sample \(P\) particles from the initial condition \(\rho_0\). To present the algorithm, we rewrite the particle approximation by \(\rho_n = \frac{M_0}{P}\sum_{p = 1}^P \delta(x - X_n^p)\). At a given time step, our algorithm consists of two sub-steps: updating \(c\) and \(\rho\). 
\medskip

\noindent 
\textit{Updating chemical concentration \(c\)}: Let \(\delta t = t_{n + 1} - t_n > 0\) be the time step. We update \(c\) by the explicit Euler scheme: 
\begin{align}
c_{n+1} = c_{n} - \delta t\, c_{n}\, \rho_{n}.
\end{align}
\textit{Updating density \(\rho\)}: 
%In the update step of the density \(\rho\), we 
update the particle positions \(\{X_n^p\}_{p = 1:P}\) using an Euler-Maruyama scheme of the SDE: 
\begin{equation} \label{particle_update}
X_{n+1}^p = X_n^p + \chi \, \nabla_{x}c(X_n^p, t_n)\, \delta t + \sqrt{2\gamma\delta t}\, N_n^p
\end{equation}
where \(N_n^p\)'s are i.i.d. standard normal distributions with respect to Brownian paths in the SDE formulation. The particles are then binned into \(N_g\) bins. \par 
Each timestep requires \(\mathcal{O}(P)\) operations to assign particles to bins and \(\mathcal{O}(N_g)\) operations to create the bins, making the overall complexity \(\mathcal{O}(P + N_g)\), rendering the method efficient for high-dimensional problems. \par 
Computing \(\nabla_x c(x, t_n)\) is difficult 
%by finite difference 
as it may not be strictly defined 
%available at certain locations 
due to $\rho_{n-1}$ being only coarsely valued through a histogram of particle positions. To circumvent this, we first construct an everywhere differentiable interpolation of $c_n$ as a proxy, then take gradient on the interpolator for an approximation. A similar treatment has been adopted in training neural networks with piecewise constant activation functions. The resulting 
proxy gradient is known as coarse gradient \cite{yin2018blended,yin2018understanding,long2023recurrence} or straight-through-estimator \cite{Hubara2017QuantizedNN}. 

A classical interpolation is spline which is expensive to generalize to 3D and will be compared with neural interpolation in Alg. \ref{alg:one}.

\RestyleAlgo{ruled}
\normalem

\begin{algorithm2e}
    %\caption{SIPF at time-step \(n\)}
    \caption{Neural SIPF}\label{alg:one}
    \DontPrintSemicolon
    \SetKwInOut{Input}{input}\SetKwInOut{Output}{output}
    \Input{\(\Omega, T, \chi, \mu, P, M_0, \delta t, \rho_0, c_0.\)}
    \Output{\(\rho_T\), \(c_T.\)}
    Initialize \(X^{1}_{0}, ..., X^{P}_{0}\) on \(\Omega\) based on initial data \(\rho_0.\)\;
    \For{\(n = 0, ..., n_T\)}
    {
        Bin particles \(X^{1}_{n}, ..., X^{P}_{n}\) and define \(\rho_n\) according to (\ref{rho_update})
        \(\rho_n \gets \text{histogram}(\rho_n)\)\;
        \uIf {\(n = 0\)}
        {
        \(c_n \gets c_0\)
        }
        \Else
        {\(c_{n+1} \gets c_{n} - \delta t\, c_{n}\, \rho_{n}\), over each bin\\
        \(c(x,t_n) \gets \text{CNN interpolator}(c_n)\)\\
        Compute \(\nabla_x c(x,t_n)\)\\
        Update \(X_{n}\) to \(X_{n+1}\) by (\ref{particle_update}) with \(\nabla_x c(x,t_n)\).
        }
    }
\end{algorithm2e}

\subsection{Neural interpolator training}
In the 2D case, a neural interpolator was trained on radially symmetric self-similar solutions computed by a finite difference method. These solutions were processed as 2D images by the neural network. However, processing 3D solutions on the whole spatial domain is computationally expensive and thus requires a different training strategy when implementing the neural interpolator to interpolate the concentration variable. \par
To train the neural network, we first use a finite difference method to compute solutions to the radial system below.
\begin{align}
    \rho_t &= \gamma (\rho_{rr} + \frac{2}{r}\rho_r) + \chi (\rho_rc_r + \rho c_{rr} + \frac{2}{r}\rho c_r)\\
    c_t &= -c\, \rho.
\end{align}
These solutions were computed with \(\gamma = \chi = 1\) with \(M_0 = 1\) on the space \([0, 100]\) with \([0, 50]\) for time.
Then, taking a random sample of 50 solutions across the time domain, we augment a patch of each 3D reconstructed solution to use as training data for the neural network interpolation. During augmentation, we perform a combination of down-sampling, shifting, and Gaussian blurring of the patch by randomly selecting three separate parameters within certain ranges during training. For down-sampling, shifting, and blurring respectively, the ranges were set to be \((2, 4), (-20, 20), (0, 0.5)\).  The patches are then resized to fit the dimensions of the input \(c\) defined on a uniform grid \(D \times H \times W\). Serving as the input of the neural interpolator, a patch is then interpolated to a defined resolution by passing through several convolutional layers that follow an encoder-decoder like structure. The details of the architecture including the changes of channel sizes are shown in Fig. \ref{cnndiagram}. We train the neural network for 100 epochs with Adam optimizer, learning rate of \(10^{-3}\), and a batch size of 4 with mean squared loss (MSE) as our loss function. The neural network training took 9753.73 seconds on an NVIDIA GTX GeForce 1080 GPU, with a 80/20 training/validation split. The loss vs. epoch in training can be found in Fig. \ref{trainingloss}. We remark that the loss during training does not go completely to zero, indicating %a limitation of the 
that neural interpolator serves as an inexact interpolation method. Nonetheless, we shall see that the method helps SIPF capture the dynamic behavior in comparison with FDM solutions and at a much faster speed in 3D.
%We reserve the exploration of this point to a future work. 

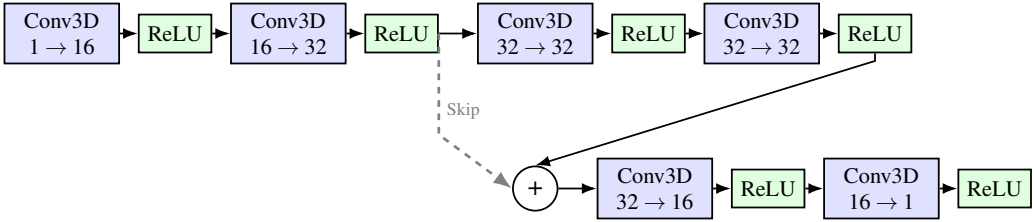
\begin{figure}
    \centering
    \scalebox{1.0}{
\resizebox{\textwidth}{!}{ % Scale diagram to fit text width
\begin{tikzpicture}[
  conv/.style = {draw, thick, minimum width=1.8cm, minimum height=0.9cm,
                 fill=blue!12, align=center, font=\footnotesize},
  act/.style  = {draw, thick, minimum width=1.05cm, minimum height=0.6cm,
                 fill=green!12, align=center, font=\footnotesize},
  add/.style  = {draw, circle, thick, minimum size=7mm, inner sep=0pt, fill=white},
  arrow/.style = {-Latex, thick},
  skip/.style  = {-Latex, very thick, dashed, gray}
]

% ---------------- Top row: encoder + middle ----------------
\node[conv] (conv1) {Conv3D\\$1 \rightarrow 16$};
\node[act, right=0.28cm of conv1] (relu1) {ReLU};
\node[conv, right=0.28cm of relu1] (conv2) {Conv3D\\$16 \rightarrow 32$};
\node[act, right=0.28cm of conv2] (relu2) {ReLU};

\node[conv, right=0.6cm of relu2] (conv3) {Conv3D\\$32 \rightarrow 32$};
\node[act, right=0.28cm of conv3] (relu3) {ReLU};
\node[conv, right=0.28cm of relu3] (conv4) {Conv3D\\$32 \rightarrow 32$};
\node[act, right=0.28cm of conv4] (relu4) {ReLU};

% ---------------- Bottom row: addition + decoder ----------------
\node[add, below=1.6cm of conv3] (add) {+};
\node[conv, right=0.6cm of add] (conv5) {Conv3D\\$32 \rightarrow 16$};
\node[act, right=0.28cm of conv5] (relu5) {ReLU};
\node[conv, right=0.28cm of relu5] (conv6) {Conv3D\\$16 \rightarrow 1$};
\node[act, right=0.28cm of conv6] (relu6) {ReLU};

% ---------------- Forward arrows ----------------
\draw[arrow] (conv1) -- (relu1);
\draw[arrow] (relu1) -- (conv2);
\draw[arrow] (conv2) -- (relu2);
\draw[arrow] (relu2) -- (conv3);
\draw[arrow] (conv3) -- (relu3);
\draw[arrow] (relu3) -- (conv4);
\draw[arrow] (conv4) -- (relu4);
\draw[arrow] (relu4.south) -- ++(0,-0.12) -- (add.north);
\draw[arrow] (add) -- (conv5);
\draw[arrow] (conv5) -- (relu5);
\draw[arrow] (relu5) -- (conv6);
\draw[arrow] (conv6) -- (relu6);

% ---------------- Skip connection (straight down) ----------------
% From the right side of relu2 directly down to add
\coordinate (skip_down) at ([yshift=-1.6cm]relu2.east);
\draw[skip] (relu2.east) -- (skip_down) -- (add.west);

% Label for skip connection
\node[font=\scriptsize, align=center, gray, above right=0.1cm and -0.02cm of skip_down] {Skip};

\end{tikzpicture}
}
}
    \caption{Diagram of the CNN architecture.}
    \label{cnndiagram}
\end{figure}

\begin{figure*}
    \centering
    \includegraphics[width=0.75\textwidth]{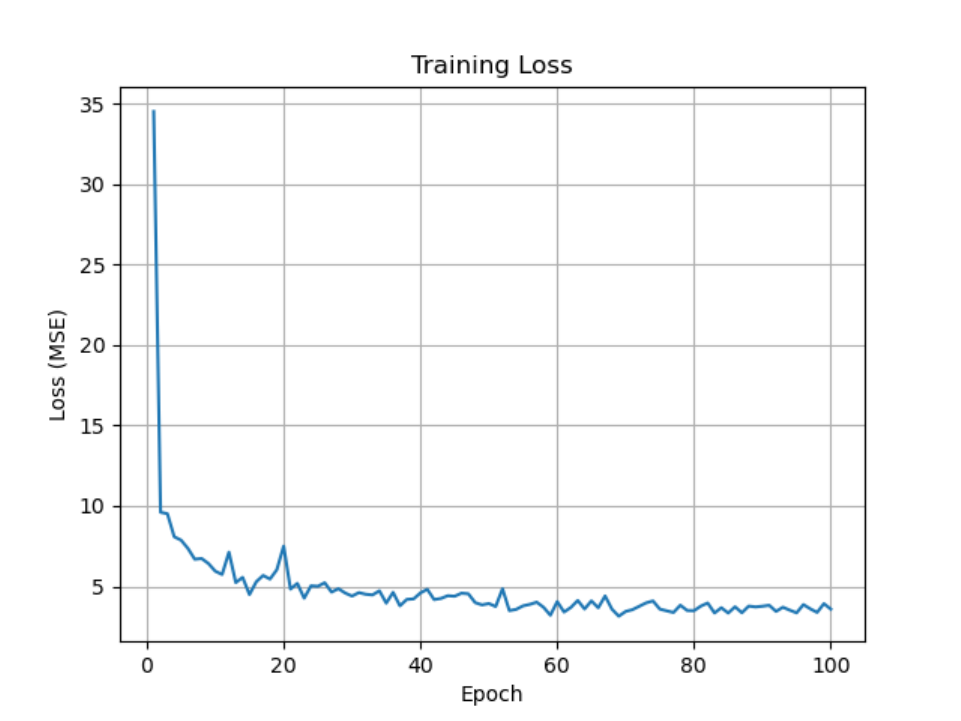}
    \caption{CNN training loss vs. epochs.}
    \label{trainingloss}
\end{figure*}

\section{Numerical Experiments and Discussion}
In this section we provide numerical results of the SIPF algorithm for one blob and two blob initial conditions as well as demonstrate the efficiency of the algorithm compared to classical methods such as classical SIPF or FDM. We provide numerical convergence results and discuss important points of the algorithm. For simplicity unless otherwise stated, all simulations initialize density and concentration as Gaussian blobs with \(\gamma = \chi = 1\) and assume \(M_0 = 1\) for density.  To implement the classical interpolator we use Python's RectBivariateSpline package for the 2D case and the interpn package for the 3D case. For FDM experiments, the spatial domain is discretized as a uniform grid and the differential operators in the PDE are discretized as forward differences. For classical SIPF experiments, the algorithm remains the same except the interpolation step is replaced with spline interpolation. All computational experiments were conducted on Python and training the neural network was done in PyTorch. 

\subsection{Diffusive behavior with one blob}
As a first experiment, we consider an initial standardized Gaussian density blob with standard deviation \(5.0\) positioned at the center of the spatial domain \((50, 50, 50)\) with an initial concentration of food source which is also a standardized Gaussian blob with standard deviation \(10.0\). The initial blob diffuses out as it consumes the food source. The blob diffuses outward as shown in Figure \ref{onebump}. As a demonstration that the neural interpolation is an inexact method, a cross-section plot is shown in Fig. \ref{crossection}.

\begin{figure*}[!tbh]
    \centering
    \begin{subfigure}[t]{0.48\textwidth}  
        \centering 
        \includegraphics[width=1.1\textwidth]{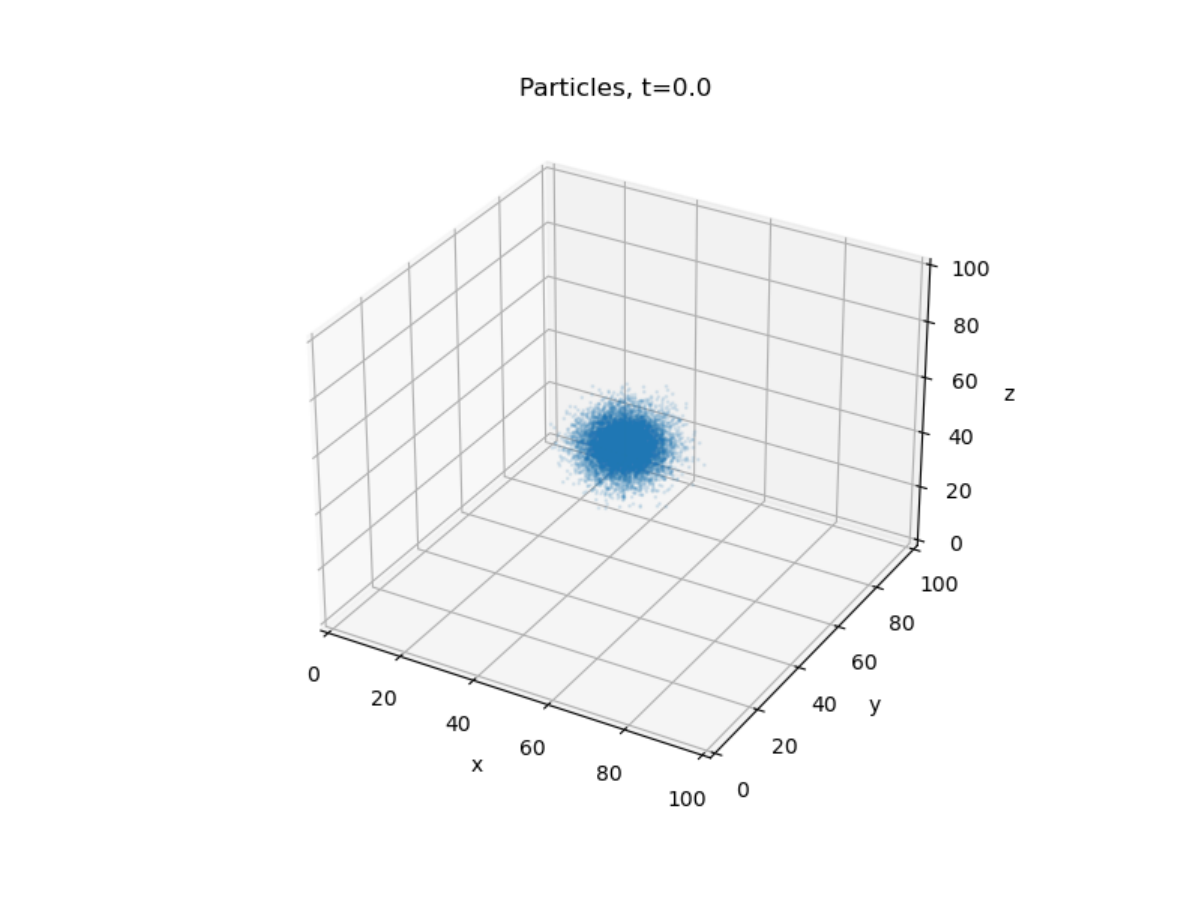}

       % \caption[]%
       % {{\small SIPF with classical interpolation at \(t = 4.0\)}} 
    \end{subfigure}
    \hfill
    \begin{subfigure}[t]{0.48\textwidth}   
        \centering 
        \includegraphics[width=1.1\textwidth]{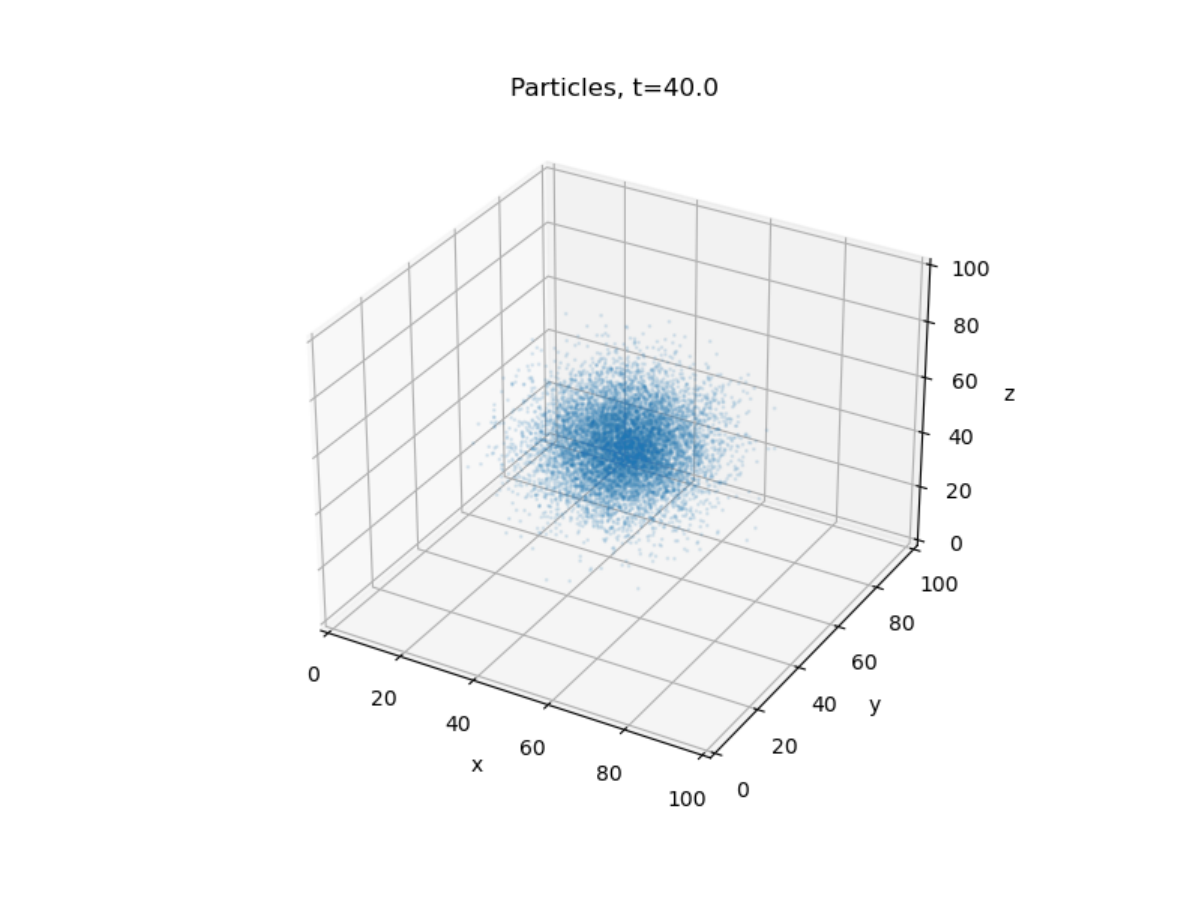}

        %\caption[]%
        %{{\small SIPF with neural interpolation at \(t = 4.0\)}}   
    \end{subfigure}
    \caption[]%
    {{\small One bump solution particles at $t = 0$ and $t = 40$ produced by NSIPF with \(P = 20000\).}}
    \label{onebump}
\end{figure*}

\begin{figure*}
    \centering
    \includegraphics[width=0.75\linewidth]{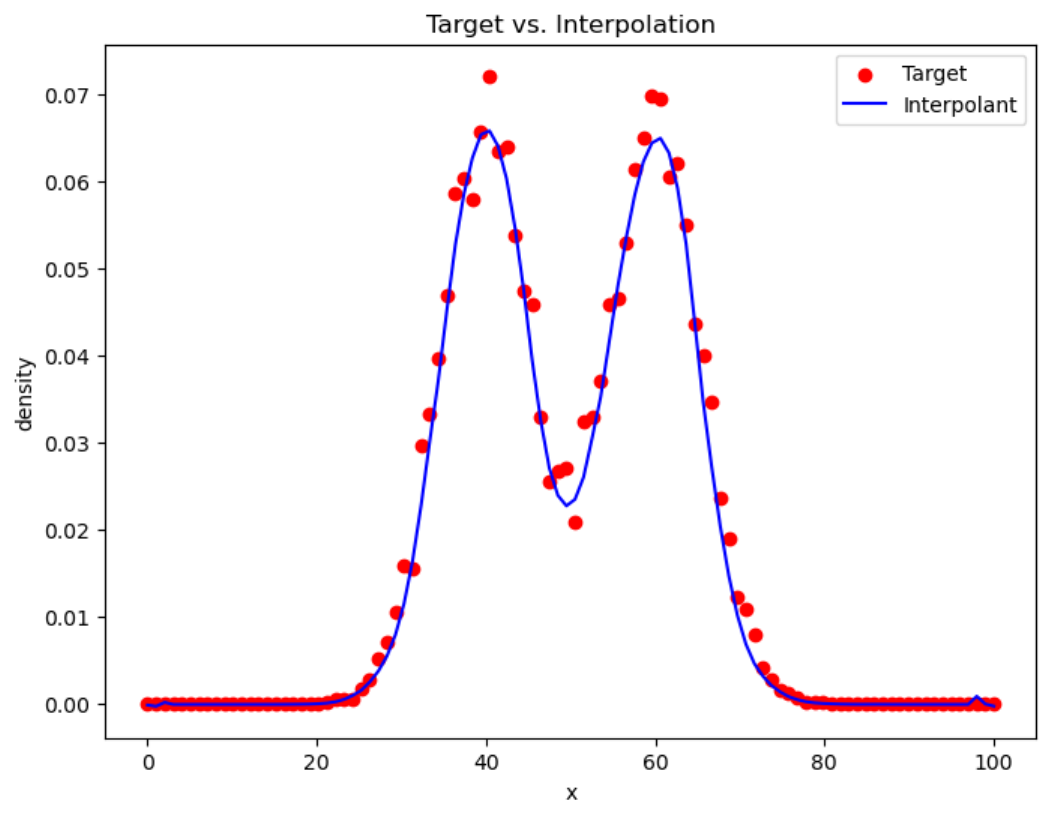}
    \caption{Target data points (red) and their neural interpolation at the \(y = 50\) slice.}
    \label{crossection}
\end{figure*}

\subsection{Aggregation behavior with two blobs}
In this experiment, we demonstrate the power of NSIPF to simulate solutions to the PHKS system with non radial initial data. We initialize \(\rho\) to be a sum of two Gaussian blobs of standard deviation \(5.0\) positioned off-center (\((30, 30, 30)\) and \((70, 70, 70)\)) in the spatial domain while \(c\) is a single Gaussian blob of standard deviation \(10.0\) located in the center (\((50, 50, 50)\)). The concentration is scaled to be much larger than density to provide more biological realism to show that the blobs aggregate toward the food source in finite time. As we can see in Fig. \ref{2bumpparticles}, the blobs aggregate towards the center where the concentration is and continues to grow there. Fig. \ref{2bump} shows cross-section plots of density comparing FDM and NSIPF with 20,000 particles.

\subsection{Aggregation behavior along annuli boundary}
In this last experiment, we demonstrate NSIPF's ability to simulate solutions to the PHKS system with different initial conditions for the concentration variable. In particular, we initialize concentration to be a sum of two annuli in 3D, which is different from the blob training data provided for the neural interpolator. Additionally, we vary \(\gamma, \chi, L, T\), and resolution outside of their ranges during the neural interpolator training while comparing to an FDM reference using 20,000 particles. The Wasserstein-1 error for \(\rho\) and relative error for \(c\) are computed at \(T\). The density is a single Gaussian blob located in the center of the domain. We see that in Fig. \ref{annuli}, the particles quickly aggregate along the boundary of the annuli where the food source is highest. The comparison with the FDM reference under different parameters is shown in Table \ref{table: annulicomparison}. Even when the parameters are changed, we observe that the error remains small for both \(\rho\) and \(c\), indicating that the NSIPF is still able to produce solutions comparable to traditional FDM with different initial conditions. Furthermore, there is no retraining required for a different set of parameters.

\begin{table}[h!]
    \centering
    \begin{tabular}{|c|c|c|}
    \hline
        \((\gamma, \chi, L, T)\) & Resolution & Error \((\rho, c)\) \\
        \hline
        \((1, 1, 100, 50)\) & \(200 \times 200 \times 200\) & (8.60e-07, 0.006)\\
         \hline
        \((10, 5, 100, 100)\) & \(200 \times 200 \times 200\) & (1.05e-06, 0.0004)\\
        \hline
        \((10, 5, 100, 5)\) & \(400 \times 400 \times 400\) & (9.76e-07, 0.001)\\
        \hline
        \((0.1, 0.1, 100, 50)\) & \(100 \times 100 \times 100\) & (6.89e-07, 0.005)\\
        \hline 
        \((1, 1, 100, 5)\) & \(250 \times 250 \times 250\) & (9.24e-07, 0.0008)\\
        \hline 
        \((1, 1, 200, 50)\) & \(250 \times 250 \times 250\) & (1.02e-07, 2.7e-08)\\
        \hline 
        
    \end{tabular}
    \caption{Comparison of FDM and NSIPF methods with annuli initial condition for \(c\)}
    \label{table: annulicomparison}
\end{table}

\begin{figure*}[t!]
    \centering
    \begin{subfigure}[t]{0.48\textwidth}  
        \includegraphics[width=1.1\textwidth]{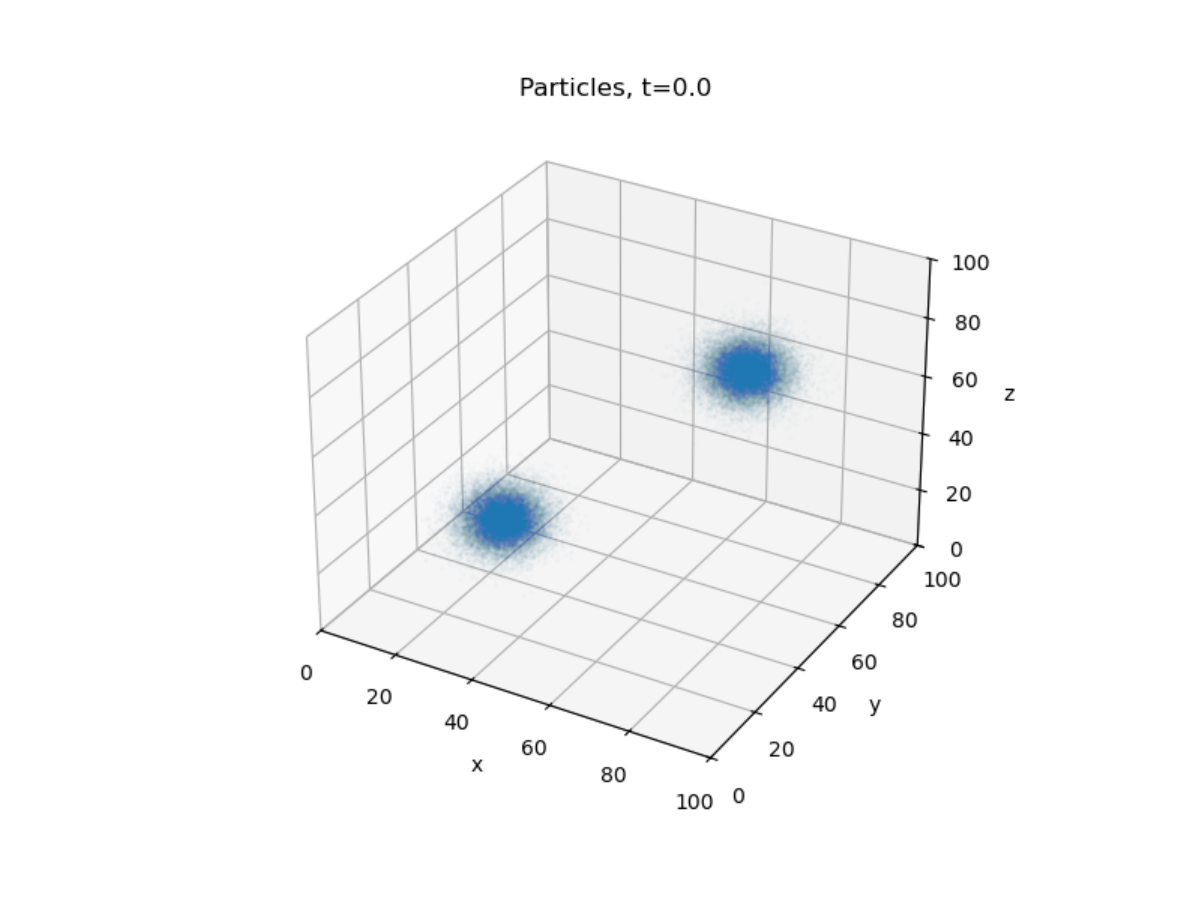}

       % \caption[]%
       % {{\small SIPF with classical interpolation at \(t = 4.0\)}}   
    \end{subfigure}
    \begin{subfigure}[t]{0.48\textwidth}   
        \includegraphics[width=1.1\textwidth]{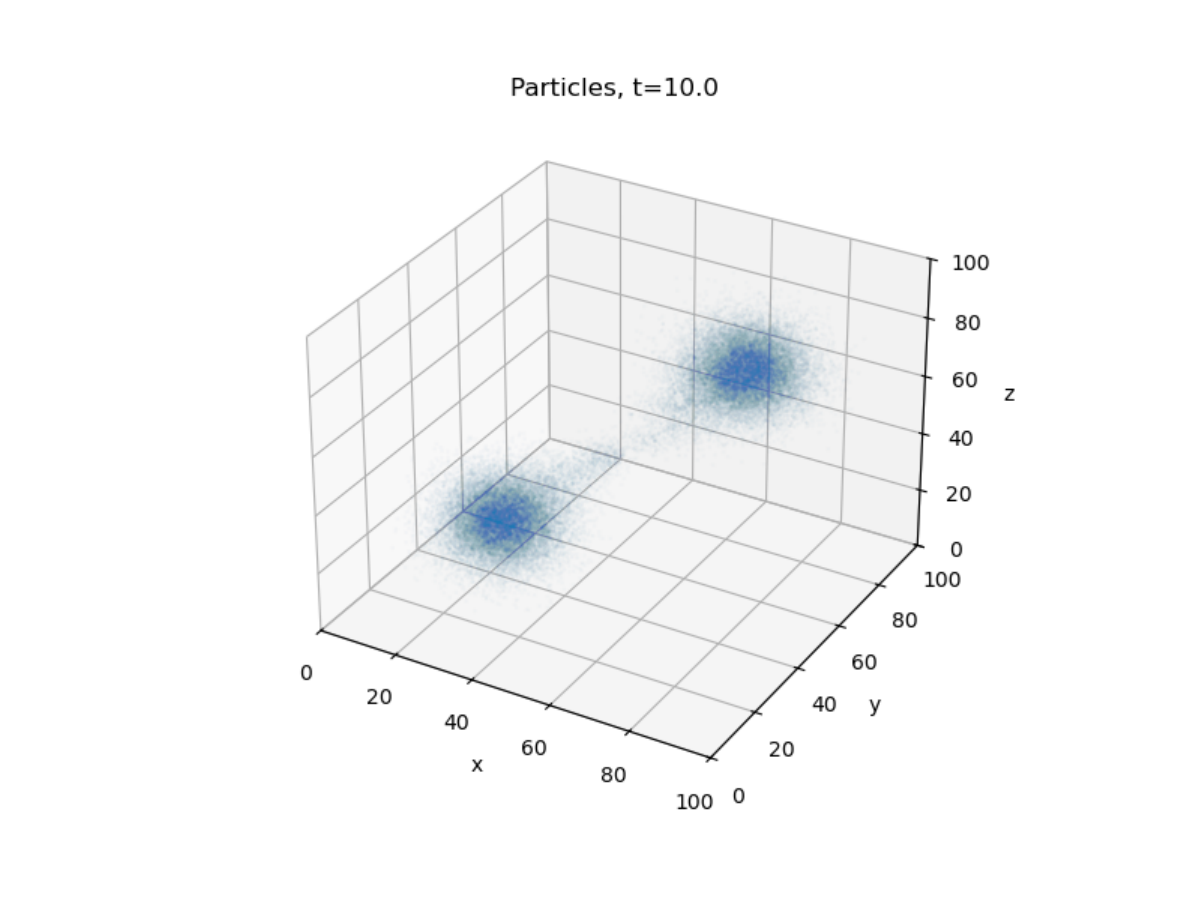}

        %\caption[]%
        %{{\small SIPF with neural interpolation at \(t = 4.0\)}}   
    \end{subfigure}
    \begin{subfigure}[t]{0.48\textwidth}
        \includegraphics[width=1.1\textwidth]{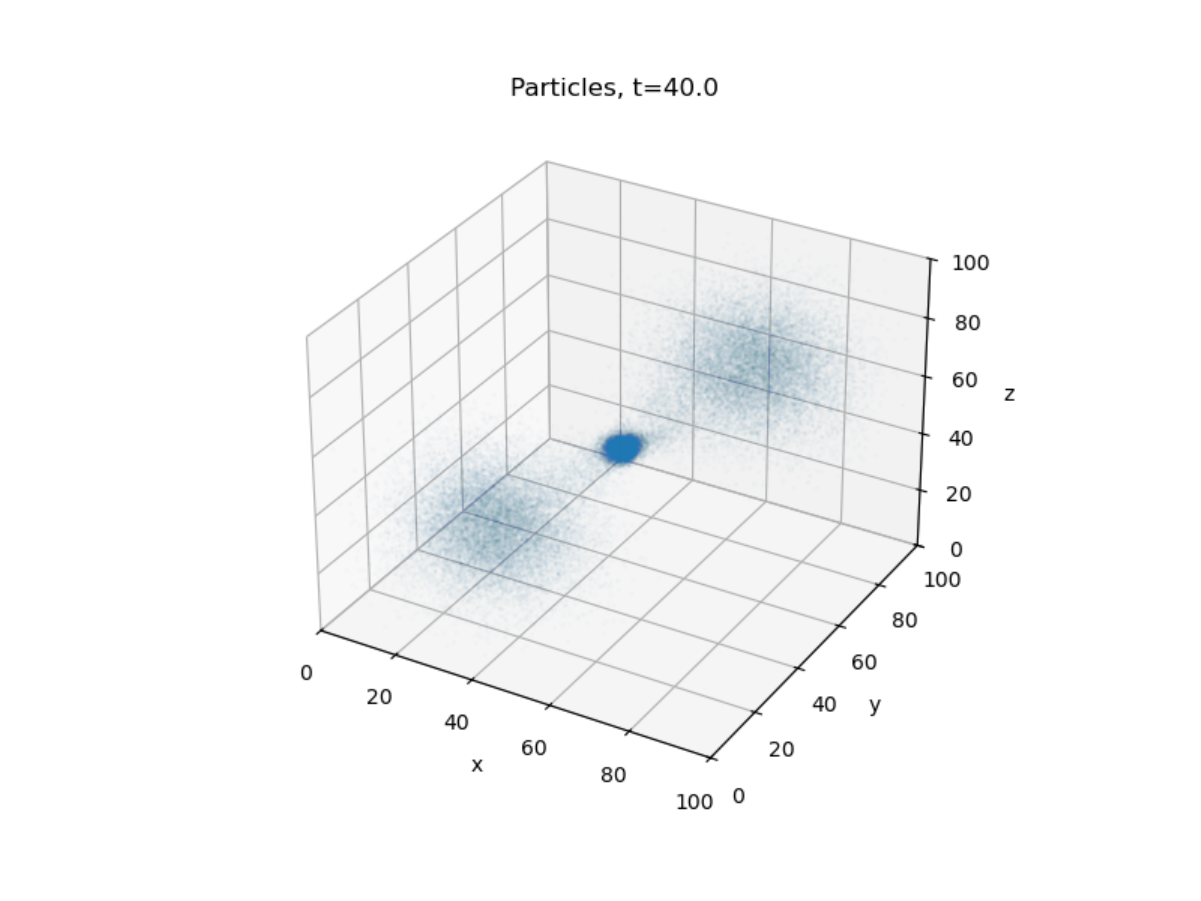}
    \end{subfigure}
    \caption[]
    {{\small Two-blob solution particles aggregating toward the center at t = 0.0, 10.0, 40.0 produced by NSIPF with \(P = 20000\)}}
    \label{2bumpparticles}
\end{figure*}

\begin{figure*}[t!]
    \centering
    \begin{subfigure}[t]{\textwidth}
        \includegraphics[width=\textwidth]{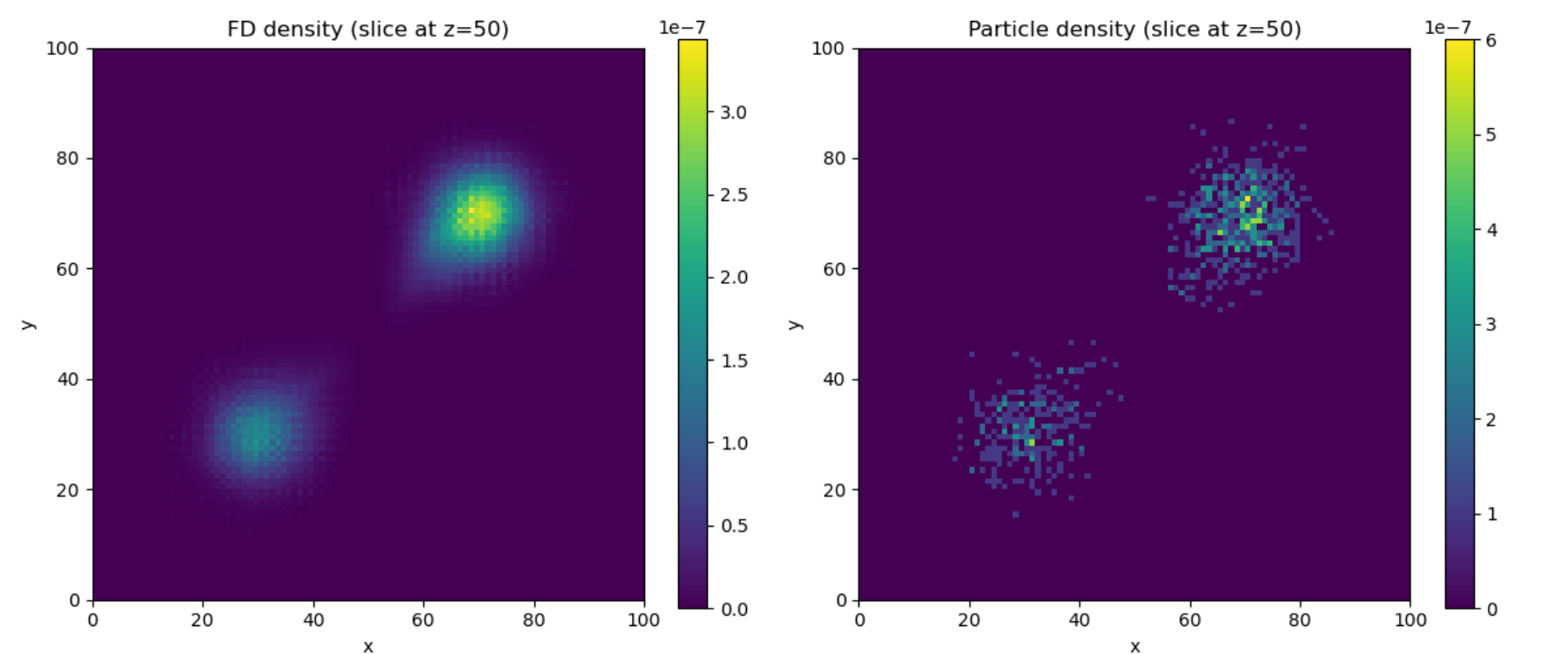}
    \end{subfigure}
    \hfill
    \begin{subfigure}[t]{\textwidth}
        \includegraphics[width=\textwidth]{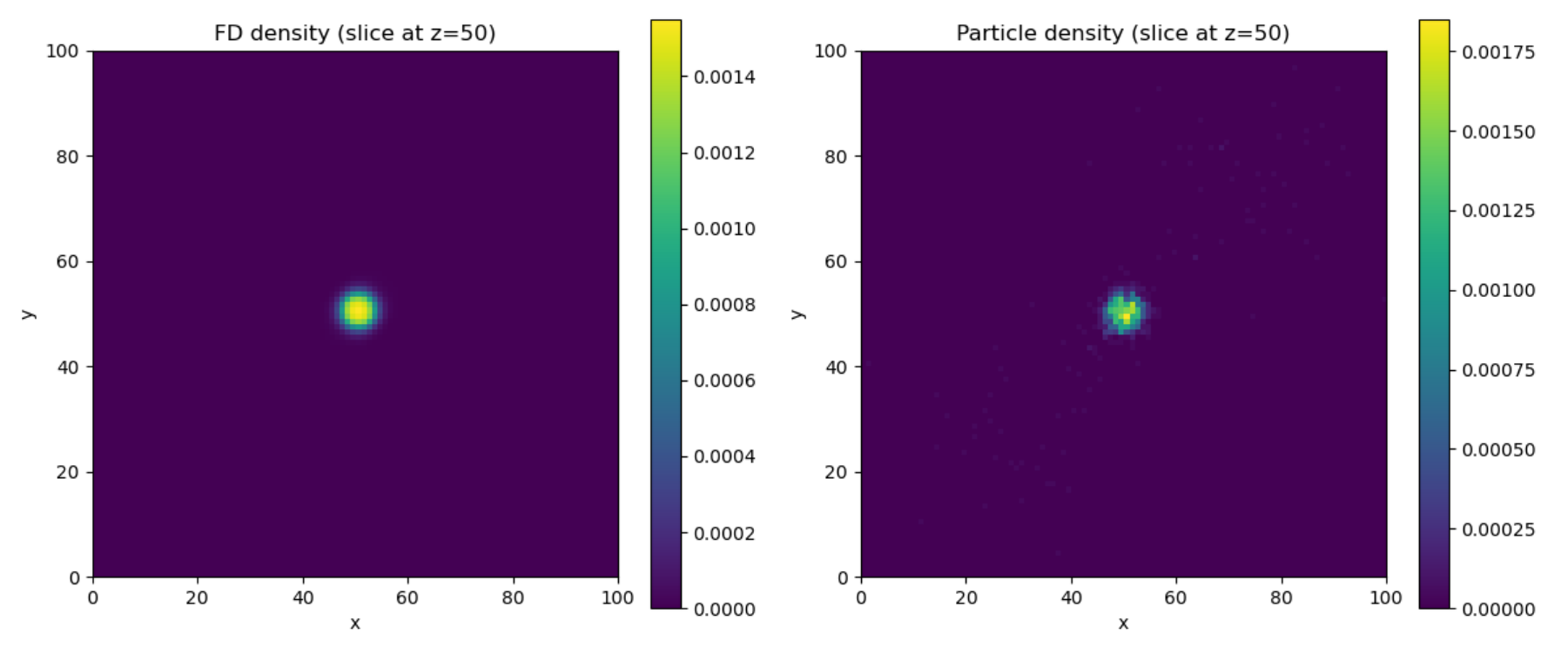}
    \end{subfigure}
    \caption[]%
    {{\small Two bump solution comparison at $t = 2.0$ and $t=40.0$ with FDM (left) and NSIPF (right) with \(P = 20000\)\footnotemark}}
    \label{2bump}
\end{figure*}

\begin{figure*}[t!]
    \centering
    \begin{subfigure}[t]{0.48\textwidth}  
        \includegraphics[width=1.1\textwidth]{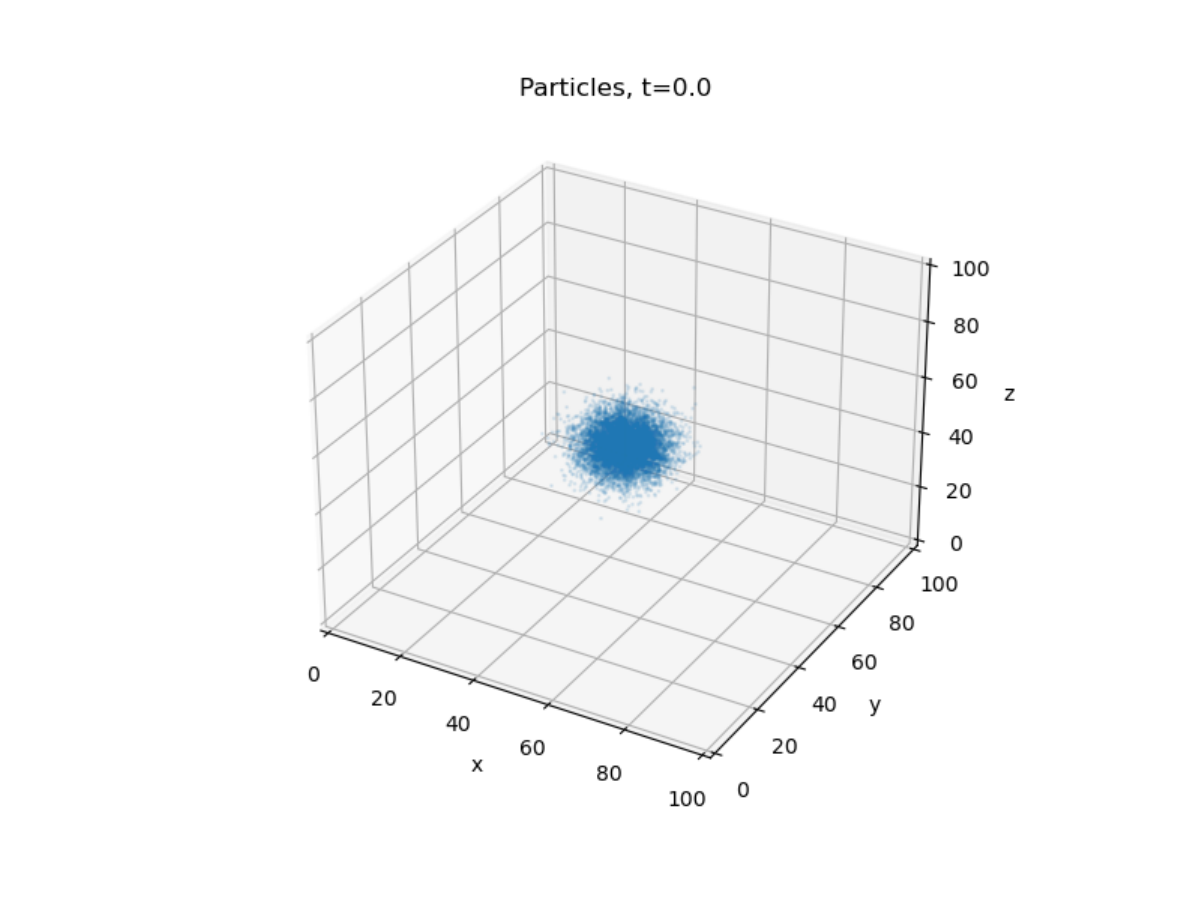}

       % \caption[]%
       % {{\small SIPF with classical interpolation at \(t = 4.0\)}}   
    \end{subfigure}
    \begin{subfigure}[t]{0.48\textwidth}   
        \includegraphics[width=1.1\textwidth]{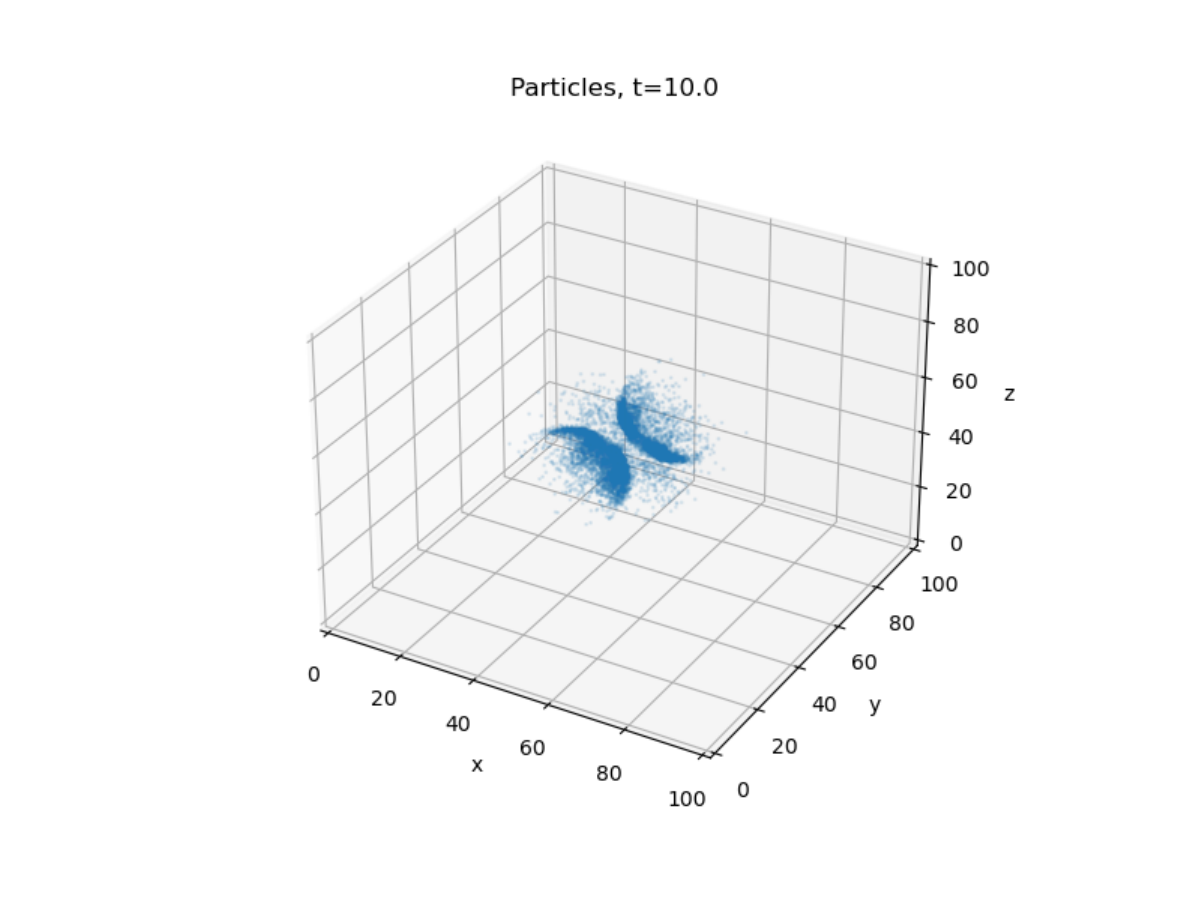}

        %\caption[]%
        %{{\small SIPF with neural interpolation at \(t = 4.0\)}}   
    \end{subfigure}
    \begin{subfigure}[t]{0.48\textwidth}
        \includegraphics[width=1.1\textwidth]{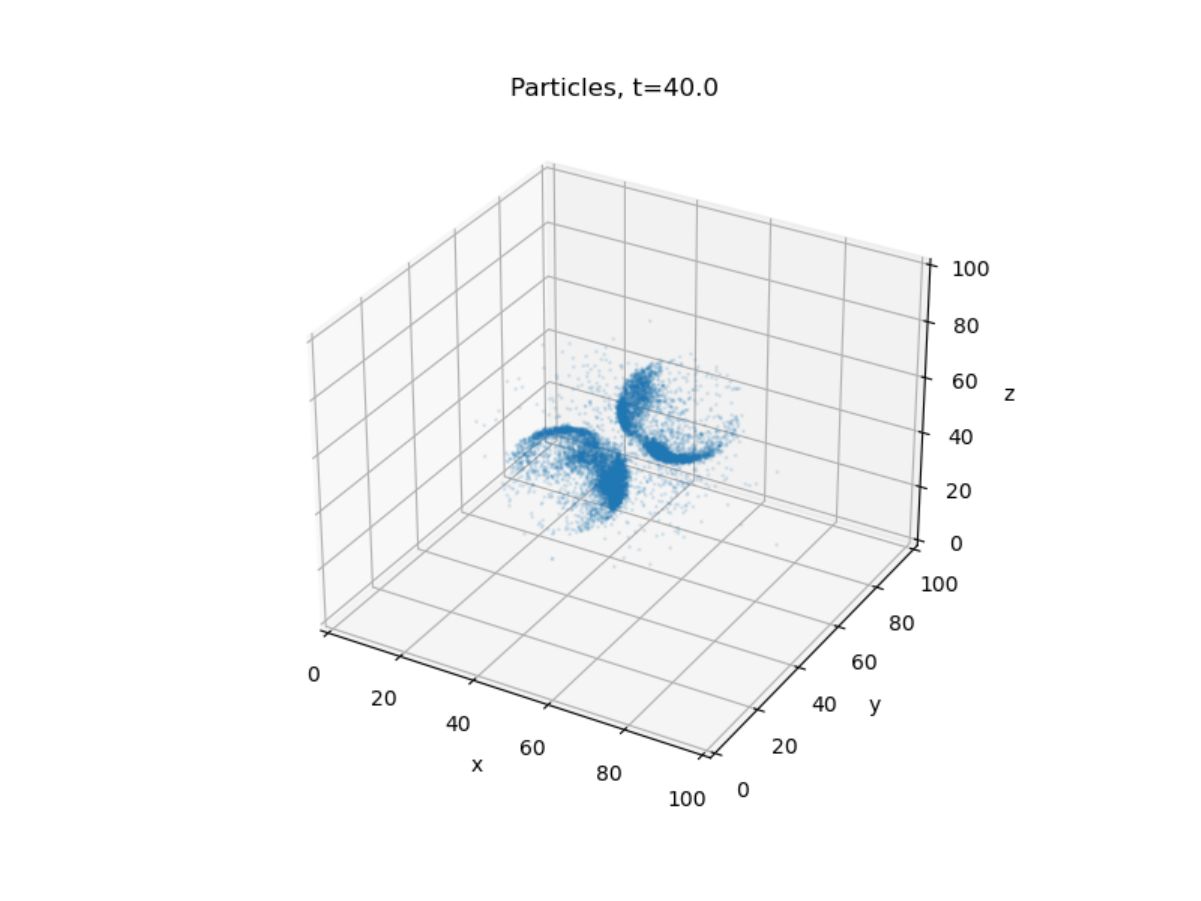}
    \end{subfigure}
    \caption[]
    {{\small One bump particles gathering around the boundary of the annuli at t = 0.0, 10.0, 40.0. produced by NSIPF with \(P = 20000\)}}
    \label{annuli}
\end{figure*}

\subsection{Comparison of numerical methods}
In this subsection, we compare the numerical methods implemented in our 3D experiments. In Table \ref{table: Comparison of Runtimes}, we compare runtimes of all three methods with timestep \(dt = 0.1\). The main baselines we compare NSIPF to are FDM and the classical SIPF. The usage of classical interpolation methods such as spline in the classical SIPF heavily extends runtimes compared to either FDM or Neural SIPF. The spline method interpolates across all particle positions in the spatial domain defined by a grid and thus results in a longer runtime. However, the neural interpolator, since it is trained on a sample of augmented solution data, takes less time when called to interpolate only the particle positions in which density or concentration is nonzero. Thus, the interpolation task is reduced from a global task of interpolating all particle positions to just a local one and a function call in our experiments and implementation. As in the annuli experiment, the neural interpolator is able to interpolate concentration data that are not blobs. \par 
We also compare classical SIPF and Neural SIPF methods with respect to particle count and see the weakness of classical methods when it comes to runtime. As we can see from Table \ref{table: classical vs. neural}, NSIPF is unburdened by the increasing particle count. We found that NSIPF runtime starts to lengthen at around 500k particles but still remains faster than traditional methods.

\begin{table}[h!]
\centering
\begin{tabular}{|c|c|c|}
\hline
    Method & Resolution & Runtime (s) \\
    \hline
    FDM & $50 \times 50 \times 50$ & 7.31\\
    & $100 \times 100 \times 100$ & 56.89\\
    & $200 \times 200 \times 200$ & 742.24\\
    % & 400 & 8480.06\\
    \hline
    Classical SIPF (P = 20k) & $50 \times 50 \times 50$ & 2955.73\\
    & $100 \times 100 \times 100$ & 3919.37\\
    & $200 \times 200 \times 200$ & 7599.54\\
    \hline
    NSIPF (P = 20k) & $50 \times 50 \times 50$ & 8.69 \\
    & $100 \times 100 \times 100$ & 33.12 \\
    & $200 \times 200 \times 200$ & 243.86 \\
    \hline
\end{tabular}
\caption{Comparison of run times for different resolutions in 3D  (50/100/200 refers to the number of grid points/bins in each dimension).}
\label{table: Comparison of Runtimes}
\end{table}

\begin{table}[h!]
\centering
\begin{tabular}{|c|c|c|}
\hline
    P & Classical SIPF Runtime (s) & NSIPF Runtime (s)\\
    \hline
    1000 & 211.27 & 31.97\\
    5000 & 998.38 & 32.21\\
    10000 & 1959.34 & 32.42\\
    \hline
\end{tabular}
\caption{Classical vs. Neural SIPF run times with respect to particle number.}
\label{table: classical vs. neural}
\end{table}

\subsection{Numerical convergence}
In this section, we present numerical findings to convergence of the NSIPF method with respect to both the number of particles \(P\) and the timestep \(dt\). We calculate the relative \(L^2\) error, 
\begin{equation}
    \frac{\sqrt{\sum (f_{num} - f_{ref})^2}}{\sqrt{\sum f_{ref}^2}}
\end{equation}
where \(f_{num}, f_{ref}\) are the numerical and reference solutions respectively. The reference solution is taken to be an FDM output with \(dx = 0.02\) and \(dt = 0.0125\). In Figure \ref{convergence}, we compute the relative \(L^2\) error of density and concentration with respect to different particle count \(P\) and timestep \(\delta t\). Fitting the slope of the error in the log-log plot yields \(e(P) = \mathcal{O}(P^{-0.47})\) for density and \(e(\delta t) = \mathcal{O}(\delta t^{1.01})\) respectively, implying that NSIPF is of order \(-\frac{1}{2}\) with respect to \(P\) and first order with respect to timestep. \par

\begin{figure*}[t!]
    \centering
    \begin{subfigure}[t]{0.46\textwidth}  
        \includegraphics[width=1.1\textwidth]{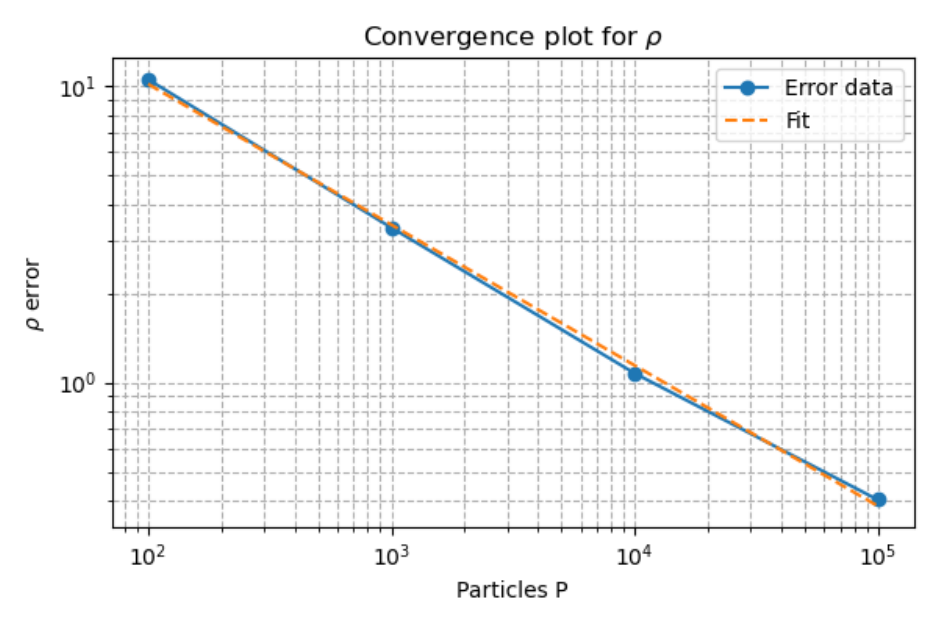}

       % \caption[]%
       % {{\small SIPF with classical interpolation at \(t = 4.0\)}}   
    \end{subfigure}
    \hfill
    \begin{subfigure}[t]{0.46\textwidth}   
        \includegraphics[width=1.1\textwidth]{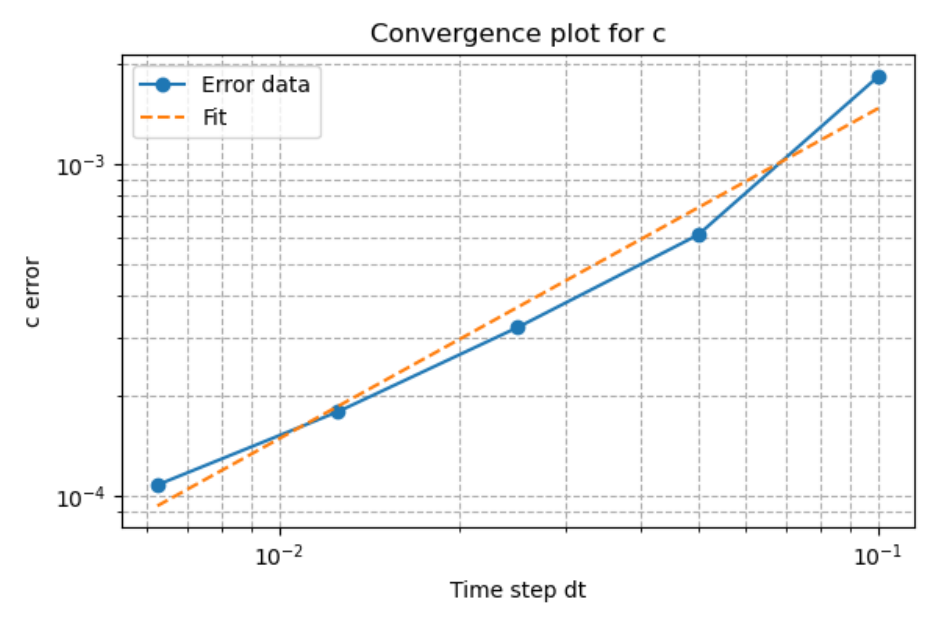}

        %\caption[]%
        %{{\small SIPF with neural interpolation at \(t = 4.0\)}}   
    \end{subfigure}
    \caption[]
    {{\small Convergence plots of \(\rho, c\) using NSIPF. Fitted slopes are \(-0.47, 1.01\) respectively.}}
    \label{convergence}
\end{figure*}

\footnotetext{The resolution of the cross-section of the NSIPF solution increases when particle count increases.}

\section{Conclusion and Future Research}
We have presented an interacting particle method with neural interpolation that effectively computes solutions to the PHKS system in 3D and agrees with computations using traditional methods. Motivated by a theoretical propagation of chaos statement, we present a neural interpolator enhancing the performance of the classical SIPF. The CNN used for interpolation is trained on low cost radial solution data generated from FDM in one dimension (radial variable) and demonstrates efficacy in handling initial concentration profiles other than blobs and other parameter regimes. In future work, we aim to improve interpolation performance in runtime and speed. Finally with the particle solutions here as training data, we aim to develop a physics-aware  (mass and non-negativity preserving) generative AI model for predicting cell dynamics, 
%of biological reaction-diffusion systems, 
bypassing the 
%NSIPF computation and 
reliance on mechanistic models entirely.

\section{Acknowledgments}
This work was partially supported by NSF grant DMS-2309520, by the Swedish Research Council grant 
no.2021-06594 at the Institut Mittag-Leffler in Djursholm, Sweden, as well as the E. Schr\"odinger Institute in Vienna, Austria, both during JX's visit in the Fall of 2025.
 \medskip

The authors would like to thank Dr. Antoine Diez for helpful discussions on propagation of chaos for the PHKS system.

\bibliographystyle{siam}
\bibliography{refs}

\end{document}